\newcommand{\fust}[1][f]{{#1}^{\ast}}
\newcommand{\flst}[1][f]{{#1}_{\ast}}
\newcommand{\finv}[1][f]{{#1}^{-1}}
\newcommand{\tensor}{\otimes}
\newcommand{\ZZ}{\mathbb{Z}}
\newcommand{\QQ}{\mathbb{Q}}
\newtheorem{thm}{Theorem}
\newtheorem{lem}[thm]{Lemma}
\newtheorem{prop}[thm]{Proposition}
\newtheorem*{lem*}{Lemma}
\newtheorem*{thm*}{Theorem}
\newtheorem*{prop*}{Proposition}
\theoremstyle{definition}
\theoremstyle{remark}
\newtheorem*{ex*}{Example}
\theoremstyle{plain}
\newcommand{\define}[1]{\emph{#1}}
\newcommand{\shf}[1]{\mathscr{#1}}
\newcommand{\OX}{\shf{O}_X}
\def\overbar#1#2#3{{%
	\setbox0=\hbox{\displaystyle{#1}}%
	\dimen0=\wd0
	\advance\dimen0 by -#2 
	\vbox {\nointerlineskip \moveright #3 \vbox{\hrule height 0.3pt width \dimen0}%
		\nointerlineskip \vskip 1.5pt \box0}%
}}
\newcommand{\PP}{P}
\newcommand{\famX}{\mathfrak{X}}
\newcommand{\famXsing}{\mathfrak{X}_{\mathit{sing}}}
\newcommand{\dualX}{X^{\vee}}
\newcommand{\etaW}{\eta_W}
\newcommand{\singplus}{\!\!\! \bigoplus_{t \in C \setminus C_0} \!\!\!}
\newcommand{\class}[1]{\lbrack #1 \rbrack}
\DeclareMathOperator{\codim}{codim}
\begin{document}

\title[Two observations]{Two observations about normal functions}
\author[C.~Schnell]{Christian Schnell}
\address{Department of Mathematics, Statistics \& Computer Science \\
University of Illinois at Chicago \\
851 South Morgan Street \\
Chicago, IL 60607}
\email{cschnell@math.uic.edu}
\subjclass[2000]{14D05}
\keywords{Normal function, Singularity of normal function, Intersection cohomology, Dual variety}
\begin{abstract}
Two simple observations are made: (1) If the normal function associated to
a Hodge class has a zero locus of positive dimension, then it has a singularity. (2)
The intersection cohomology of the dual variety contains the cohomology of the
original variety, if the degree of the embedding is large.
\end{abstract}
\maketitle

This brief note contains two elementary observations about normal functions and their
singularities that arose from a conversation with Pearlstein. The proofs are very
simple, and it is quite possible that both statements are known; nevertheless, it
seemed useful to me to have them written down. Throughout, $X$ will be a smooth
projective variety of dimension $2n$, and $\eta$ a primitive Hodge class of weight
$2n$ on $X$, say with integer coefficients. We shall also let $\pi \colon \famX \to
\PP$ be the universal hypersurface section of $X$ by some very ample class $H$, with
discriminant locus $\dualX \subset \PP$. 

\section{The zero locus of a normal function}

Here we show that if the zero locus of the normal function associated to a Hodge
class $\eta$ contains an \emph{algebraic} curve, then the normal function must be
singular at one of the points of intersection between $\dualX$ and the closure of
the curve. 

\begin{prop} \label{prop:1}
Let $\nu$ be the normal function on $\PP \setminus \dualX$, associated to
a non-torsion Hodge class $\eta \in H^{2n}(X, \ZZ) \cap H^{n,n}(X)$. Assume that the zero locus
of $\nu$ contains an algebraic curve, and that the divisor $H$ is sufficiently ample.
Then $\nu$ is singular at one of the points where the closure of the curve
meets $\dualX$.
\end{prop}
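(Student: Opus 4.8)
The plan is to pull the whole situation back to a curve and then transfer a vanishing statement back up to $X$ by a trace argument. Let $\cl{C}$ denote the closure of the curve in $\PP$; since $C \subset \PP\setminus\dualX$ we have $\cl{C}\not\subset\dualX$, so $\cl{C}$ meets $\dualX$ in finitely many points $s_1,\dots,s_k$. I would normalize to get $\widetilde{C}\to\cl{C}$, pull back the universal family to a morphism $f\colon\mathcal{Y}\to\widetilde{C}$ (resolving $\mathcal{Y}$ so that it becomes smooth and projective of dimension $2n$), and let $q\colon\mathcal{Y}\to X$ be the induced map. The first point is that $q$ is surjective and generically finite: the fibre of $q$ over a general $x\in X$ is the set of $s\in\cl{C}$ with $x\in X_s$, i.e.\ the intersection of $\cl{C}$ with a hyperplane in $\PP$, which is finite. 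Hence $q_{*}q^{*}=(\deg q)\cdot\id$ on $H^{\bullet}(X,\QQ)$, so $q^{*}$ is injective there, and it suffices to prove that $q^{*}\eta=0$ in $H^{2n}(\mathcal{Y},\QQ)$; this forces $\eta$ to be torsion, against hypothesis.

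Next I would use primitivity to push $q^{*}\eta$ into the ``middle'' of the Leray (or perverse) filtration for $f$. Because $\eta$ is primitive of top degree we have $\eta\cup H=0$, and under the Lefschetz isomorphism $H^{2n}(X_s)\cong H^{2n-2}(X)$ this shows $\eta|_{X_s}=0$ for every smooth section $X_s$; thus the $\mathrm{gr}^{0}$ (restriction-to-fibre) part of $q^{*}\eta$ vanishes. Simultaneously $q^{*}\eta\cup q^{*}H=q^{*}(\eta\cup H)=0$ exhibits $q^{*}\eta$ as relatively primitive, which should kill the top $\mathrm{gr}^{2}$ part as well. So $q^{*}\eta$ lives entirely in the middle piece, governed by the local system $\mathcal{H}=R^{2n-1}f_{*}\QQ$ on $\widetilde{C}\setminus\Sigma$, where $\Sigma=f^{-1}(\{s_1,\dots,s_k\})$.

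By the decomposition theorem this middle piece splits as $IH^{1}(\widetilde{C},\mathcal{H}(n))$ together with contributions supported on $\Sigma$. The $IH^{1}$-summand is exactly the topological invariant of the normal function pulled back to $\widetilde{C}$; since $\nu$ vanishes identically on $C$, this pulled-back normal function is zero and the summand vanishes. The supported summand at a point over $s_i$ is, by the identification of singularities with the supported parts of the decomposition (in the style of Brosnan--Fang--Nie--Pearlstein and M.\ Saito, valid once $H$ is sufficiently ample so that the degenerations are mild enough), precisely the singularity $\mathrm{sing}_{s_i}(\nu)$ of the original normal function. If $\nu$ were non-singular at every $s_i$, all of these would vanish, giving $q^{*}\eta=0$ and hence a contradiction. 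Therefore $\nu$ must be singular at one of the points where $\cl{C}$ meets $\dualX$.

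The step I expect to be the crux is this last identification. One must carefully distinguish the \emph{automatically zero} singularity of the pulled-back normal function on $\widetilde{C}$ (its section is identically zero, so as an abstract normal function on a curve it has no singularity) from the supported component of $q^{*}\eta$, which still remembers the interaction of $\eta$ with the vanishing cohomology at $s_i$ and computes the upstairs invariant $\mathrm{sing}_{s_i}(\nu)$. Getting the perverse/Leray degrees right so that these supported pieces genuinely account for $q^{*}\eta$ and correspond to singularities of $\nu$ — rather than to the trivial extension data of the zero section on $\widetilde{C}$ — is the delicate point, and it is precisely here that the hypothesis ``$H$ sufficiently ample'' is needed, to guarantee that the family is regular enough for the decomposition-theorem description of singularities to apply.
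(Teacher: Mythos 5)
Your setup (normalize the closure of the curve, pull back the family to a smooth projective $2n$-fold $\mathcal{Y}$, note that $q\colon\mathcal{Y}\to X$ is generically finite, kill the fibre-restriction part of $q^{*}\eta$ by primitivity and the $H^1$-part over the punctured curve by the zero-locus hypothesis, then localize what remains at the boundary points and invoke a BFNP-type criterion) is exactly the architecture of the paper's proof. But two load-bearing steps are missing, and the first one is where the real Hodge-theoretic input has to enter. Your claim that relative primitivity ``should kill the top $\mathrm{gr}^{2}$ part as well'' is unjustified, and your overall strategy --- prove $q^{*}\eta=0$ and contradict injectivity of $q^{*}$ --- never uses that $\eta$ is a Hodge class of type $(n,n)$, only that it is primitive and non-torsion; that alone cannot suffice. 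The paper closes this gap with the second Hodge--Riemann bilinear relation: since the intersection form is definite on primitive $(n,n)$-classes, $\int_{\mathcal{Y}}q^{*}\eta\cup q^{*}\eta=(\deg q)\int_{X}\eta\cup\eta\neq 0$. This makes your $\mathrm{gr}^{2}$ step unnecessary: once the $\mathrm{gr}^{0}$ and $\mathrm{gr}^{1}$ pieces vanish, the class lies in $L^{2}$ of the Leray filtration, so its self-intersection lands in $L^{4}H^{4n}(\mathcal{Y})=0$, contradicting the displayed inequality. Concretely, the paper avoids the decomposition theorem altogether: it works over the affine curve $C_0$, where the Leray spectral sequence has only two columns, so the vanishing of the $H^0$- and $H^1$-pieces already gives $\eta_W|_{W_0}=0$ (Lemma~\ref{lem:W0}); then the exact sequence of the pair $(W,W_0)$ and Poincar\'e--Lefschetz duality turn $\int_W\eta_W\cup\eta_W\neq 0$ into the statement that $i_t^{*}(\eta_W)\neq 0$ for some boundary fibre $E_t$.

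The second gap is the one you yourself flag as the crux: the identification of the supported summands with the singularities $\mathrm{sing}_{s_i}(\nu)$. This is not something to be re-derived inside the proof; it is precisely the theorem of Brosnan--Fang--Nie--Pearlstein and de Cataldo--Migliorini that the paper quotes as a black box, in the form: $\nu$ is singular at $p\in\dualX$ if and only if the image of $\eta$ in $H^{2n}\bigl(\pi^{-1}(p),\QQ\bigr)$ is nonzero (with the hypothesis ``$H$ sufficiently ample'' discharged by Dimca--Saito for $H=dA$, $d\geq 3$). Note also a small translation step you elide: your supported pieces live at points of $\widetilde{C}$ lying over $s_i$, i.e.\ on the fibres $E_t$ of the pulled-back and resolved family, so one must observe that the restriction $H^{2n}(X,\QQ)\to H^{2n}(E_t,\QQ)$ factors through $H^{2n}\bigl(\pi^{-1}(s_i),\QQ\bigr)$ before the criterion applies. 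With the Hodge--Riemann input added and the BFNP/dCM theorem cited in this precise form, your outline becomes the paper's proof; as written, it does not close.
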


Before giving the proof, we briefly recall some definitions. In general, a normal
function for a variation of Hodge structure of odd weight on a complex manifold $Y_0$
has an associated \define{cohomology class}. If $H_{\ZZ}$ is the local system underlying the
variation, then a normal function $\nu$ determines an extension of local systems
\begin{equation} \label{eq:ext}
\begin{diagram}
0 &\rTo& H_{\ZZ} &\rTo& H'_{\ZZ} &\rTo& \ZZ &\rTo& 0.
\end{diagram}
\end{equation}
The cohomology class $\class{\nu} \in H^1 \bigl( Y_0, H_{\ZZ} \bigr)$ of the normal
function is the image of $1 \in H^0 \bigl( Y_0, \ZZ \bigr)$ under the connecting
homomorphism for the extension.

In particular, the normal function $\nu$ associated to a Hodge class $\eta$
determines a cohomology class $\class{\eta} \in H^1 \bigl( \PP \setminus \dualX, R^{2n-1}
\flst[\pi] \ZZ \bigr)$. With rational coefficients, that class 
can also be obtained directly from $\eta$ through the Leray spectral sequence
\[
	E_2^{p,q} = H^p \bigl( \PP \setminus \dualX, R^q \flst[\pi] \QQ \bigr)
		\Longrightarrow H^{p+q} \bigl( P \times X \setminus \pi^{-1}(\dualX), \QQ).
\]
To wit, the pullback of $\eta$ to $P \times X \setminus \pi^{-1}(\dualX)$ goes to
zero in $E_2^{0, 2n}$ because $\eta$ is primitive, and thus gives an element of
$E_2^{1, 2n-1}$; this element is precisely $\class{\eta}$.

\begin{lem} \label{lem:W0}
Let $C_0 \to P \setminus \dualX$ be a smooth affine curve mapping into the zero locus of $\nu$,
and let $W_0$ be the pullback of the family $\famX$ to $C_0$. Then the image of the Hodge class
$\eta$ in $H^{2n}(W_0, \QQ)$ is zero.
\end{lem}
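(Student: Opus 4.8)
The plan is to compute $p^{\ast}\eta$, the pullback of $\eta$ along the natural map $p\colon W_0\to X$, by means of the Leray spectral sequence for the family $g\colon W_0\to C_0$, whose fibres are the smooth sections $X_t$ ($t\in C_0$), and then to exploit the hypothesis that $\nu$ vanishes along $C_0$. Two facts will drive the argument: that a primitive class restricts to zero on every smooth section, and that the class $\class{\eta}$ of the normal function is exactly the Leray component of $p^{\ast}\eta$ in the second column.

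First I would show that $j_t^{\ast}\eta=0$ in $H^{2n}(X_t,\QQ)$ for each $t\in C_0$, where $j_t\colon X_t\hookrightarrow X$ is the inclusion of a fibre. Since $X_t$ is a smooth ample divisor of dimension $2n-1$ in the $2n$-dimensional variety $X$, the Lefschetz hyperplane theorem together with Poincaré duality shows that the Gysin map $j_{t\ast}\colon H^{2n}(X_t,\QQ)\to H^{2n+2}(X,\QQ)$ is an isomorphism. On the other hand $j_{t\ast}j_t^{\ast}\eta=\eta\cup[X_t]=L\eta$, where $L$ denotes cup product with the hyperplane class; and $L\eta=0$ precisely because $\eta$ is a primitive class in the middle degree $2n=\dim X$. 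Injectivity of $j_{t\ast}$ then forces $j_t^{\ast}\eta=0$.

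Next I would run the Leray spectral sequence
\[
	E_2^{p,q}=H^p\bigl(C_0,R^q g_{\ast}\QQ\bigr)\Longrightarrow H^{p+q}(W_0,\QQ).
\]
Because $C_0$ is a smooth affine curve, $H^p(C_0,-)$ vanishes for $p\ge 2$ on any local system, so only the columns $p=0,1$ survive and the sequence degenerates at $E_2$. The resulting two-step Leray filtration on $H^{2n}(W_0,\QQ)$ has graded pieces $E_2^{0,2n}=H^0\bigl(C_0,R^{2n}g_{\ast}\QQ\bigr)$ and $E_2^{1,2n-1}=H^1\bigl(C_0,R^{2n-1}g_{\ast}\QQ\bigr)$. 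The image of $p^{\ast}\eta$ in the top quotient $E_2^{0,2n}$ is its fibrewise restriction, assembled from the classes $j_t^{\ast}\eta$, which vanishes by the previous step. Hence $p^{\ast}\eta$ lies in $E_2^{1,2n-1}=H^1\bigl(C_0,R^{2n-1}g_{\ast}\QQ\bigr)$, and by the functoriality of this construction under the base change $\phi\colon C_0\to\PP\setminus\dualX$ — exactly the recipe recalled above for producing $\class{\eta}$ — this Leray component is the restriction $\phi^{\ast}\class{\eta}$, i.e.\ the cohomology class of the normal function $\nu$ pulled back to $C_0$.

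Finally, since $C_0$ maps into the zero locus of $\nu$, the pulled-back normal function $\phi^{\ast}\nu$ on $C_0$ is the zero section, so its cohomology class $\phi^{\ast}\class{\eta}$ vanishes in $H^1\bigl(C_0,R^{2n-1}g_{\ast}\QQ\bigr)$. Combined with the vanishing of the $E_2^{0,2n}$ component, this gives $p^{\ast}\eta=0$ in $H^{2n}(W_0,\QQ)$, as claimed. I expect the main obstacle to be the precise identification in the previous paragraph: verifying that the surviving Leray component of $p^{\ast}\eta$ really is the class of the normal function, which requires matching the extension-class definition of $\class{\eta}$ with its spectral-sequence description and checking that both are compatible with restriction along $\phi$. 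The fibrewise vanishing and the degeneration are comparatively routine.
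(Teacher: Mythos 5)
Your argument is correct and follows essentially the same route as the paper: the Leray spectral sequence for $W_0 \to C_0$ degenerates into a two-step filtration, the $E_2^{0,2n}$ component of the pulled-back class vanishes by primitivity, and the surviving $E_2^{1,2n-1}$ component is identified (via compatibility with the spectral sequence for $\pi$) with the class of $\nu$ restricted to $C_0$, which is zero on the zero locus. Your Gysin-map justification of the fibrewise vanishing is a slightly more detailed version of the paper's appeal to primitivity, but the structure of the proof is the same.
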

\begin{proof}
Let $\psi \colon W_0 \to C_0$ be the obvious map. The pullback of $R^{2n-1}
\flst[\pi] \QQ$ to $C_0$ is naturally isomorphic to $R^{2n-1} \flst[\psi] \QQ$;
moreover, when $\nu$ is restricted to $C_0$, its class is simply the image of
$\class{\eta}$ in $H^1 \bigl( C_0, R^{2n-1} \flst[\psi] \QQ \bigr)$. That image has
to be zero, because $C_0$ maps into the zero locus of $\nu$.

Now let $\eta_0 \in H^{2n} \bigl( W_0, \QQ \bigr)$ be the image of the Hodge class
$\eta$. The Leray spectral sequence for the map $\psi$ gives a short exact sequence
\begin{diagram}[size=0pt]
	0 & \rTo & H^1 \bigl( C_0, R^{2n-1} \flst[\psi] \QQ \bigr) & \rTo & 
		H^{2n} \bigl( W_0, \QQ \bigr) 
		& \rTo & H^0 \bigl( C_0, R^{2n} \flst[\psi] \QQ \bigr) & \rTo & 0,
\end{diagram}
and as before, $\eta_0$ actually lies in $H^1 \bigl( C_0, R^{2n-1} \flst[\psi] \QQ
\bigr)$. Because the spectral sequences for $\psi$ and $\pi$ are compatible,
$\eta_0$ is equal to the image of $\class{\eta}$; but we have already seen that 
this is zero.
\end{proof}

Returning to our review of general definitions, let $\nu$ be a normal function on $Y_0$. 
When $Y_0 \subseteq Y$ is an open subset of a bigger complex manifold, one can look
at the behavior of $\nu$ near points of $Y \setminus Y_0$. The
\define{singularity} of $\nu$ at a point $y \in Y \setminus Y_0$ is by definition the
image of $\class{\nu}$ in the group
\[
	\lim_{U \ni y} H^1 \bigl( U \cap Y_0, H_{\ZZ} \bigr),
\]
the limit being over all analytic open neighborhoods of the point \cite{BFNP}*{p.~1}.
If the singularity is non-torsion, $\nu$ is said to be \define{singular} at the point $y$.

When $\nu$ is the normal function associated to a non-torsion primitive Hodge class $\eta \in
H^{2n}(X, \ZZ)$, Brosnan, Fang, Nie, and Pearlstein \cite{BFNP}*{Theorem~1.3}, and
independently de Cataldo and Migliorini \cite{dCM}*{Proposition~3.7}, have
proved the following result: Provided the vanishing cohomology of the smooth fibers
of $\pi$ is nontrivial, $\nu$ is singular at a point $p \in \dualX$ if, and
only if, the image of $\eta$ in $H^{2n} \bigl( \pi^{-1}(p), \QQ \bigr)$ is nonzero.
By recent work of Dimca and Saito \cite{DS}*{Theorem~6}, it suffices to take $H = d
A$, with $A$ very ample and $d \geq 3$.

\begin{proof}[Proof of Proposition~\ref{prop:1}]
Let $C$ be the normalization of the closure of the curve in the zero locus. Pulling
back the universal family $\famX \to \PP$ to $C$ and resolving singularities, we
obtain a smooth projective $2n$-fold $W$, together with the following two maps:
\begin{diagram}
	W & \rTo^{\lambda} & X \\
	\dTo^{\psi} \\
	C 
\end{diagram}
This may be done in such a way that the general fiber of $\psi$ is a smooth
hypersurface section of $X$; let $C_0 \subseteq C$ be the open subset where this
holds, and $W_0 = \finv[\psi](C_0)$ its preimage. Assume in addition that, for each
$t \in C \setminus C_0$, the fiber $E_t$ is a divisor with simple normal crossing
support. The map $\lambda$ is generically finite, and we let $d$ be its degree.

Let $\etaW = \fust[\lambda](\eta)$ be the pullback of the Hodge class to $W$. By
Lemma~\ref{lem:W0}, the restriction of $\etaW$ to $W_0$ is zero. Consider now the
exact sequence
\begin{diagram}[width=3em]
	H^{2n}(W, W_0, \QQ) & \rTo^i &  H^{2n}(W, \QQ) & \rTo &
			H^{2n}(W_0, \QQ).
\end{diagram}
By what we have just observed, $\etaW$ belongs to the image of the map $i$, say $\etaW =
i(\alpha)$. Under the nondegenerate pairing (given by Poincar\'e duality)
\[
	H^{2n}(W, W_0, \QQ) \tensor \singplus H^{2n}(E_t, \QQ) \to \QQ,
\]
and the intersection pairing on $W$, the map $i$ is dual to the restriction map
\[
	H^{2n}(W, \QQ) \to \singplus H^{2n}(E_t \QQ),
\]
and so we get that
\[
	d \cdot \int_X \eta \cup \eta = 
	\int_W \etaW \cup \etaW = \bigl\langle i(\alpha), \etaW \bigr\rangle = 
	\sum_{t \in C \setminus C_0} \bigl\langle \alpha, i_t^{\ast}(\etaW) \bigr\rangle
\]
where $i_t \colon E_t \to W$ is the inclusion. But the first integral is nonzero,
because the intersection pairing on $X$ is definite on the subspace of primitive
$(n,n)$-classes. We conclude that the pullback of $\eta$ to at least one
of the $E_t$ has to be nonzero. 

By construction, $E_t$ maps into one of the singular fibers of $\pi$, say to
$\pi^{-1}(p)$, where $p$ belongs to the intersection of $\dualX$ with the closure of
the curve. Thus $\eta$ has nonzero image in $H^{2n} \bigl( \pi^{-1}(p), \QQ \bigr)$;
by the result of \cites{BFNP,dCM} mentioned above, $\nu$ has to be singular at $p$,
concluding the proof.
\end{proof}

I do not know whether a ``converse'' to Proposition~\ref{prop:1} is true; that is to say,
whether the normal function associated to an algebraic cycle on $X$ has to have a
zero locus of positive dimension for sufficiently ample $H$. If it was, this would 
give one more equivalent formulation of the Hodge conjecture.

\section{Cohomology of the discriminant locus}

Pearlstein pointed out that the singularities of the discriminant locus should be
complicated enough to capture all the primitive cohomology of the original variety,
once $H = dA$ is a sufficiently big multiple of a very ample class.  In this section,
we give an elementary proof of this fact for $d \geq 3$. 

To do this, we need a simple lemma, used to estimate the codimension of loci in
$\dualX$ where the fibers of $\pi$ have a singular set of positive dimension. Let
\[
	V_d = H^0\bigl(X, \OX(dA)\bigr)
\]
be the space of sections of $dA$, for $A$ very ample. 

\begin{lem} \label{lem:codim}
Let $Z \subseteq X$ be a closed subvariety of positive dimension $k > 0$. Write
$V_d(Z)$ for the subspace of sections that vanish along $Z$. Then
\[
	\codim \bigl( V_d(Z), V_d \bigr) \geq \binom{d+k}{k}.
\]
\end{lem}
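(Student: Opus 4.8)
The plan is to translate the codimension into the rank of a restriction map and then to produce enough linearly independent restrictions by a general linear projection. First I would note that a section of $dA$ lies in $V_d(Z)$ precisely when its restriction to $Z$ vanishes, so $V_d(Z)$ is the kernel of the restriction map
\[
	\rho \colon V_d \esto H^0\bigl(Z, \OX(dA)\big\vert_Z\bigr),
\]
and therefore $\codim\bigl(V_d(Z), V_d\bigr) = \rk \rho = \dim \im \rho$. It thus suffices to exhibit $\binom{d+k}{k}$ elements of $V_d$ whose restrictions to $Z$ are linearly independent.

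Second, I would use very ampleness of $A$ to give myself a convenient supply of such elements. The complete linear system $\lvert A \rvert$ embeds $X$ into $\PPn{N}$, where $N = \dim V_1 - 1$ and $V_1 = H^0\bigl(X, \OX(A)\bigr)$, and under this embedding the elements of $V_1$ are exactly the restrictions of linear forms. Consequently any product of $d$ elements of $V_1$ is a section of $dA$, i.e.\ lies in $V_d$. If I can find $k+1$ elements $\ell_0, \dots, \ell_k \in V_1$ whose restrictions to $Z$ are algebraically independent, then the $\binom{d+k}{k}$ degree-$d$ monomials $\ell_0^{a_0} \cdots \ell_k^{a_k}$ (with $a_0 + \dots + a_k = d$) lie in $V_d$ and restrict to linearly independent sections on $Z$, which is exactly what is needed.

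Finally, I would produce the forms $\ell_0, \dots, \ell_k$. The homogeneous coordinate ring $S_Z$ of $Z$ (under the embedding by $A$) is a graded $\CC$-algebra generated in degree one, with Krull dimension $\dim Z + 1 = k+1$. Since $\CC$ is infinite, graded Noether normalization provides a homogeneous system of parameters consisting of $k+1$ linear forms $\bar\ell_0, \dots, \bar\ell_k \in (S_Z)_1$ over which $S_Z$ is finite; these are algebraically independent, so their degree-$d$ monomials are linearly independent in $(S_Z)_d$. Lifting the $\bar\ell_i$ to elements $\ell_i \in V_1$ then completes the argument. Geometrically this is the statement that a general linear projection $\PPn{N} \dashrightarrow \PPn{k}$ from a center disjoint from $Z$ restricts to a finite surjection $Z \to \PPn{k}$, along which one pulls back the $\binom{d+k}{k}$ independent degree-$d$ forms on $\PPn{k}$.

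The step I expect to be the crux is this last one: knowing that $k+1$ \emph{general linear} forms restrict to algebraically independent functions on $Z$ --- equivalently, that the general projection is finite and dominant on $Z$. Everything else is the routine identification of the codimension with a rank together with the monomial count $\binom{d+k}{k}$; the real content lies in the Noether normalization by \emph{linear} forms, which is available here only because the ground field is infinite.
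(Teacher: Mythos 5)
Your argument is correct, and it shares the paper's overall skeleton: both proofs exhibit the $\binom{d+k}{k}$ degree-$d$ monomials in $k+1$ suitably chosen elements of $V_1$ and check that these restrict to linearly independent sections on $Z$, which bounds the rank of the restriction map $V_d \to H^0\bigl(Z, \OX(dA)\big\vert_Z\bigr)$ from below. Where you genuinely diverge is at the one step that carries the content. The paper chooses the $k+1$ sections by a point-vanishing pattern --- points $P_0, \dotsc, P_k \in Z$ and sections $s_i$ with $s_i(P_j) = 0$ exactly for $j \neq i$ --- and asserts that the resulting monomials are ``easily seen'' to be independent on $Z$. You instead invoke graded Noether normalization over the infinite field $\CC$ (equivalently, a general linear projection restricting to a finite surjection $Z \to \PPn{k}$) to obtain $k+1$ linear forms whose restrictions to $Z$ are \emph{algebraically} independent; linear independence of all their monomials, in every degree, is then automatic. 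Your route buys a cleaner and more robust justification: evaluation at the $P_i$ in the paper's setup only kills the pure monomials $s_i^{\tensor d}$, the mixed ones require a further argument using irreducibility of $Z$, and for $k \geq 2$ the vanishing pattern alone does not force the restrictions $s_i\big\vert_Z$ to be algebraically independent (a cone over a curve, with the projection center through the vertex, shows the monomials can become dependent for larger $d$). So your proof supplies precisely the justification the paper leaves implicit. The only small point worth recording explicitly is that a degree-$d$ form vanishes on the reduced variety $Z$ if and only if it lies in the saturated ideal, so $(S_Z)_d$ injects into $H^0\bigl(Z, \OX(dA)\big\vert_Z\bigr)$ and independence in $(S_Z)_d$ really does yield independence of the restricted sections.
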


\begin{proof}
Since $A$ is very ample, we may find $(k+1)$ points $P_0, P_1, \dotsc, P_k$ on $Z$,
together with $(k+1)$ sections $s_0, s_1, \dotsc, s_k \in V_1$, such that each $s_i$
vanishes at all points $P_j$ with $j \neq i$, but does not vanish at $P_i$. Then all
the sections
\[
	s_0^{\tensor i_0} \tensor s_1^{\tensor i_1} \tensor \dotsm \tensor
		s_k^{\tensor i_k} \in V_d,
\]
for $i_0 + i_1 + \dotsb + i_k = d$, are easily seen to be linearly independent on
$Z$. The lower bound on the codimension follows immediately.
\end{proof}

We now use the estimate to make Pearlstein's suggestion precise. As one further bit 
of notation, let $\famXsing \subseteq \famX$ stand for the union of all the singular
points in the fibers of $\pi$. It is well-known that $\famXsing$ is a projective
space bundle over $X$, and in particular smooth.

\begin{prop}
Let $H = d A$ for a very ample class $A$. If $d \geq 3$, then the map
$\phi \colon \famXsing \to \dualX$ is a small resolution of singularities, and
therefore
\[
	\mathit{IH}^{\ast}(\dualX, \QQ) \simeq H^{\ast}(\famXsing, \QQ).
\]
In particular, $H^{\ast}(X, \QQ)$ is a direct summand of $\mathit{IH}^{\ast}(\dualX, \QQ)$
once $d \geq 3$.
\end{prop}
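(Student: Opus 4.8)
The plan is to establish the three assertions in sequence: that $\phi$ is a resolution, that it is \emph{small}, and then to read off both the intersection-cohomology isomorphism and the direct-summand statement from formal properties of small maps and of projective bundles. Recall that a proper map $\fun{\phi}{\widetilde Y}{Y}$ with $\widetilde Y$ smooth and irreducible is small when, for every $i > 0$,
\[
  \codim_Y \bigl\{\, y \in Y : \dim \phi^{-1}(y) \ge i \,\bigr\} > 2i;
\]
for such a map $R\phi_{\ast}\QQ$ is (a shift of) the intersection complex of $Y$, so that $\mathbb{H}^{\ast}(Y, R\phi_{\ast}\QQ) = H^{\ast}(\widetilde Y,\QQ)$ computes $\mathit{IH}^{\ast}(Y,\QQ)$. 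Thus once smallness is established the displayed isomorphism is automatic.

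First I would check that $\phi$ is a resolution. The source $\famXsing$ is smooth and projective because it is a projective bundle over $X$, and $\phi$ is projective since everything lives over $\PP$. For birationality, note that the fibre of $\phi$ over a section $s$ is exactly the singular locus $\operatorname{Sing}(X_s)$ of the corresponding hypersurface; over the dense open locus of $\dualX$ parametrizing hypersurfaces with a single ordinary double point this singular locus is one reduced point, so $\phi$ is an isomorphism there and hence birational.

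The heart of the argument—and the step I expect to be the main obstacle—is the smallness estimate, and this is where Lemma~\ref{lem:codim} enters. Since $\phi^{-1}(s) = \operatorname{Sing}(X_s)$, the locus $\{\, s : \dim\phi^{-1}(s)\ge i \,\}$ is the set of sections whose hypersurface is singular along a subvariety of dimension $\ge i$. I would stratify $\dualX$ accordingly and bound the dimension of each stratum through the incidence variety
\[
  \mathcal I_i = \bigl\{\, (Z,s) : Z\subseteq X \text{ irreducible of dimension } i,\ Z\subseteq\operatorname{Sing}(X_s) \,\bigr\}.
\]
Projecting $\mathcal I_i$ to $s$ is generically finite onto the relevant stratum (an $i$-dimensional singular locus has only finitely many $i$-dimensional components), while projecting to $Z$ has fibres contained in the linear system of sections vanishing along $Z$; here Lemma~\ref{lem:codim} supplies the crucial bound $\codim(V_d(Z),V_d)\ge\binom{d+i}{i}$. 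Combining the two projections, and controlling the dimension of the family of admissible $Z$ by fixing a point of $Z$ and exploiting the projective-bundle structure of $\famXsing\to X$, one obtains an upper bound for the dimension of the stratum; the remaining task is to verify that the resulting codimension exceeds $2i$ for every $i\ge 1$ as soon as $d\ge 3$, which reduces to an elementary inequality of the shape $\binom{d+i}{i} > 2i + (\text{the contribution of the moduli of } Z)$. Making this inequality uniform in $i$—that is, turning the pointwise estimate of Lemma~\ref{lem:codim} into a genuine global codimension bound, with the variation of $Z$ honestly accounted for—is the delicate point; the rapid growth of $\binom{d+i}{i}$ for $d\ge 3$ is what makes it go through, in agreement with the threshold $d\ge 3$ of Dimca--Saito.

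Finally, granting smallness, the isomorphism $\mathit{IH}^{\ast}(\dualX,\QQ)\simeq H^{\ast}(\famXsing,\QQ)$ follows as above. For the last assertion I would invoke the projective-bundle formula: since $\famXsing\to X$ is a projective bundle, Leray--Hirsch gives $H^{\ast}(\famXsing,\QQ)\cong\bigoplus_{j} H^{\ast-2j}(X,\QQ)\cdot\xi^{j}$ for $\xi$ the relative hyperplane class, and in particular exhibits $H^{\ast}(X,\QQ)$, the $j=0$ summand, as a direct summand of $H^{\ast}(\famXsing,\QQ)$ and hence of $\mathit{IH}^{\ast}(\dualX,\QQ)$.
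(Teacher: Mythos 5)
Your overall architecture (resolution $\Rightarrow$ smallness $\Rightarrow$ the $\mathit{IH}$ isomorphism $\Rightarrow$ Leray--Hirsch for the projective bundle $\famXsing \to X$) matches the paper, and your first and last steps are fine. The problem is exactly at the point you yourself flag as ``delicate,'' and it is not a technicality: the incidence-variety count requires an upper bound on the dimension of the family of $i$-dimensional subvarieties $Z \subseteq X$ that can occur as components of singular loci of members of $\lvert dA \rvert$, and no such bound is supplied. The family is at least bounded (the components of $\mathrm{Sing}(X_s)$ have degree controlled by $d$, since the singular locus is cut out by sections of bounded degree), but the dimension of the relevant Chow varieties grows with $d$, so the inequality $\binom{d+i}{i} > 2i + (\text{moduli of } Z)$ is not an elementary consequence of the growth of $\binom{d+i}{i}$; as written, the argument does not close.

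The paper sidesteps the moduli of $Z$ entirely by an infinitesimal argument. Choose a Whitney stratification of $\PP$ with $\dualX$ a union of strata and $\pi$, $\phi$ topologically trivial over each stratum; let $S$ be a stratum along which the fibers of $\phi$ have dimension $k>0$, and at a general $t \in S$ pick a component $Z$ of $\mathrm{Sing}\bigl(\finv[\pi](t)\bigr)$ that remains singular to first order along $S$. A tangent vector to $S$ at $t$ is represented by a section $s \in V_d$, and the first-order condition for the singular points along $Z$ to persist forces $s$ to vanish on $Z$; hence the tangent space of $S$ at $t$ lands in the image of $V_d(Z)$, and Lemma~\ref{lem:codim} gives $\codim(S, \dualX) \geq \binom{d+k}{k} - 1$ directly. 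The parameter space of the $Z$'s never enters, because $S$ is already a subvariety of $\PP$ and one only bounds its tangent space at a single general point. The elementary inequality $\binom{d+k}{k} - 1 > 2k$ for $d \geq 3$ then yields smallness. If you insist on a global count, you would need to replace ``sections vanishing on $Z$'' by ``sections singular along $Z$'' and prove an honest dimension bound on the Chow variety of admissible $Z$; the tangent-space argument is both shorter and sharper.
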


\begin{proof}
By \cite{DS}*{p.~Theorem~6}, the discriminant locus is a divisor in $P$ once $d \geq
3$. This means that there are hyperplane sections of $X$ with exactly one ordinary
double point \cite{Voisin}*{p.~317}. The map $\phi$ is then birational, and therefore
a resolution of singularities of $\dualX$.
To prove that it is a small resolution, take a Whitney stratification of $\PP$ in
which $\dualX$ is a union of strata, and such that over each stratum, the maps $\pi$
and $\phi$ are topologically trivial.  The dimension of the singular set of the
fibers of $\pi$ is then constant along each stratum. 

Let $S \subseteq \dualX$ be an arbitrary stratum along which the singular set of the
fiber has dimension $k > 0$. At a general point $t \in S$, there then has to be an
irreducible component $Z$ in the singular locus of $\finv[\pi](t)$ that remains
singular to first order along $S$. Now a tangent vector to $S$ may be represented by
a section $s$ of $dA$, and the condition that $Z$ remain singular to first order is
that $s$ should vanish along $Z$. By Lemma~\ref{lem:codim}, the space of such
sections has codimension at least $\binom{d+k}{k}$, and a moment's thought shows that,
therefore,
\[
	\codim(S, \dualX) \geq \binom{d+k}{k} - 1.
\]

This quantity is evidently a lower bound for the codimension of the locus where the
fibers of $\phi$ have dimension $k$. In order for $\phi$
to be a small resolution, it is thus sufficient that
\[
	\binom{d+k}{k} - 1 > 2 k
\]
for all $k > 0$. Now one easily sees that this condition is satisfied provided
that $d \geq 3$. This proves the first assertion; the second one is a general fact
about intersection cohomology. Finally, $H^{\ast}(X, \QQ)$ is a direct summand of
$H^{\ast}(\famXsing, \QQ)$ because $\famXsing$ is a projective space bundle over $X$,
and the third assertion follows.
\end{proof}

The proof shows that, as in the theorem by Dimca and Saito, $d \geq 2$ is sufficient
in most cases. A related result, showing the effect of taking $H$ sufficiently ample,
was obtained by Clemens; he noticed that, as a consequence of Nori's connectivity
theorem, one has an isomorphism
\[
	H^{2n}(X, \QQ)_{\mathrm{prim}} \simeq 
		H^1 \bigl( P \setminus \famX, (R^{2n-1} \flst[\pi] \QQ)_{\mathrm{van}} \bigr),	
\]
once $H$ is sufficiently ample \cite{Schnell}*{Proposition~7 on p.~11}.


\begin{bibsection}
\begin{biblist}
\bib{BFNP}{article}{
	author={Brosnan, Patrick},
	author={Fang, Hao},
	author={Nie, Zhaohu},
	author={Pearlstein, Gregory},
	title={Singularities of admissible normal functions},
	eprint={arXiv:0711.0964},
	date={2007},
	note={To appear in \emph{Inventiones Mathematicae}},
}
\bib{dCM}{article}{
   author={de Cataldo, Mark Andrea A.},
   author={Migliorini, Luca},
	title={A remark on singularities of primitive cohomology classes},
	eprint={arXiv:0711.1307v1},
	date={2007},
	note={To appear in \emph{Proceedings of the A.M.S.}},
}
\bib{DS}{article}{
	author={Dimca, Alexandru},
	author={Saito, Morihiko},
	title={Vanishing cycle sheaves of one-parameter smoothings and quasi-semistable
		degenerations},
	eprint={arXiv:0810.4896v2}
	date={2009},
}
\bib{Schnell}{article}{
	author={Schnell, Christian},
	title={Primitive cohomology and the tube mapping},
	eprint={arXiv:0711.3927},
	date={2007},
}
\bib{Voisin}{book}{
   author={Voisin, Claire},
   title={Th\'eorie de Hodge et g\'eom\'etrie alg\'ebrique complexe},
   series={Cours Sp\'ecialis\'es},
   volume={10},
   publisher={Soci\'et\'e Math\'ematique de France},
   place={Paris},
   date={2002},
   pages={viii+595},
}
\end{biblist}
\end{bibsection}

\end{document}